\newtheorem{thm}{Theorem}[section]
\newtheorem{cor}[thm]{Corollary}
\newtheorem{lem}[thm]{Lemma}
\newtheorem{prop}[thm]{Proposition}
\numberwithin{equation}{section}
\begin{document}

\title[Infinitesimal deformations and central extensions of the $n$-th Schr\"{o}dinger algebra]{Infinitesimal deformations and central extensions of the $n$-th Schr\"{o}dinger algebra}

\author[D.E.Jumaniyozov]{Doston Jumaniyozov$^{1,2}$}

\author[S.A. Sheraliyeva]{Surayyo Sheraliyeva$^1$}

\address{$^1$V.I.Romanovskiy Institute of Mathematics Uzbekistan Academy of Sciences.}

\address{$^2$National University of Uzbekistan named after Mirzo Ulugbek.}

\

\email{{\tt dostondzhuma@gmail.com, abdiqodirovna@mail.ru}}

%\address{  \newline \indent National University of Uzbekistan} \email{{\tt }}

\begin{abstract} In this paper, we describe infinitesimal deformations of the $n$-th Schr\"{o}dinger algebra $\mathfrak{sch}_n$ and the factor algebra $\mathfrak{g}_n$. We prove that the second cohomology group with values in the adjoint representation of the $n$-th Schr\"{o}dinger algebra and the factor algebra of the $n$-th Schr\"{o}dinger algebra by its one-dimensional center vanishes for $n\geq3.$ On the other hand, it is shown that for $n = 2$ in both cases
this cohomology space is one-dimensional.
Moreover, we investigate the central extensions of these algebras and compute their second cohomology groups with values in the trivial representation.

\end{abstract}

\subjclass[2020]{17B05, 17B56.}

\keywords{Lie algebra, derivation, extension, cohomology.}

\maketitle

% ----------------------------------------------------------------

\section{Introduction}

In the structural theory of Lie algebras, cohomology and deformation theory play a crucial role.
The classical deformation theory of associative and Lie algebras began
with the works of Gerstenhaber \cite{Gest} and
Nijenhuis-Richardson \cite{NR} in the 1960s. They studied
one-parameter deformations and established the connection between
the cohomology and infinitesimal deformations of Lie algebras.
The foundation of Lie algebra cohomology theory traces back to Cartan's work.
However, its development as an independent research area is attributed to Chevalley and Eilenberg \cite{ChEg}, Koszul \cite{Ksz}, and Hochschild and Serre \cite{HchSe}.
Cartan and Eilenberg \cite{CEil} provided a unified framework for the cohomology theories of groups, associative algebras, and Lie algebras.

Since the geometric description of finite-dimensional Lie algebras is an important problem, geometric approaches to cohomology have been developed in recent years.
%lately geometric approaches of cohomology have been developed.
It is a well-known result in algebraic geometry that any algebraic variety (evidently, algebras defined via identities form an algebraic variety) is a union of a finite number of irreducible components. Rigid algebras are those whose orbits under the action of an appropriate general linear group are open sets in the Zariski topology.
It is known that a Lie algebra with vanishing second cohomology group with values in the adjoint representation is rigid and the closures of the orbits of rigid algebras form the irreducible components of the variety (\cite{NR}, Theorem 7.1).
This has been a motivation for many works which focused on discovering algebras with open orbits and describing important properties of such algebras \cite{Ru, Makhlouf, Gest, NR}.

It is a well-known fact that any finite-dimensional Lie algebra over a field of characteristic zero is a semidirect product of a solvable ideal and a semisimple Lie subalgebra (Levi's theorem). Similar to semisimple Lie algebras, there are many applications of non-semisimple Lie algebras and groups in physics, e.g., the Poincar\'{e} algebra and group. Other important Lie algebras are the well-known Schr\"{o}dinger algebra in $(n+1)$-dimensional space-time and the $n$-th Schr\"{o}dinger algebra. These algebras are defined as the semidirect product of a semisimple Lie algebra and a Heisenberg algebra. Recently, these Schr\"{o}dinger algebras have been an object of many research works.

For example, all derivations of the Schr\"{o}dinger algebra in $(n+1)$-dimensional space-time have been determined in \cite{WuT}, and it has been shown that this algebra admits an outer derivation.
The derivations and automorphisms of the $n$-th Schr\"odinger algebra were described in \cite{LY}.
The first and second cohomology groups of the Schr\"{o}dinger algebra in $(1+1)$-dimensional space-time
with values in the trivial module and the finite-dimensional irreducible modules are computed in \cite{WuZh}.
The second cohomology group with values in the adjoint module of the Schr\"odinger algebra in $(n+1)$-dimensional space-time is determined in \cite{CEil}.
In this paper, we compute the second cohomology groups for the $n$-th Schr\"{o}dinger algebra $\mathfrak{sch}_n$  with values in the trivial and adjoint module, respectively.

The organization of the paper is as follows. Section 2 provides some necessary background information on Lie algebra cohomology, the Hochschild-Serre factorization theorem (Theorem 13 in \cite{HchSe}),
and the definition of the $n$-th Schr\"{o}dinger algebra. Section 3 contains the computations of the second cohomology groups of the $n$-th Schr\"{o}dinger algebra with values  in the trivial and adjoint modules. Namely, in this section, we prove that $H^2(\mathfrak{sch}_n,\mathfrak{sch}_n)=0$ for $n\geq3$ and $\dim(H^2(\mathfrak{sch}_2,\mathfrak{sch}_2))=1$.
In Section 4, we investigate the factor algebra of the $n$-th Schr\"{o}dinger algebra by its one-dimensional center.
In particular, we compute the second cohomology group of this algebra, considering both the trivial and adjoint modules, for the case where $n\geq2.$

Unless otherwise stated, any Lie algebra considered in this work is finite-dimensional
and defined over $ \mathbb{C}.$ Throughout the paper, the expression $\langle x_1,\dots,x_n\rangle$ means the linear subspace generated by the elements $x_1,\dots,x_n\in\mathfrak{g}.$

\section{Preliminaries}

A {\it Lie algebra} is a vector space $\mathfrak{g}$ endowed with bilinear mapping $[-,-]:\mathfrak{g}\times\mathfrak{g}\to\mathfrak{g}$ satisfying the following identities:
\[[x,x]=0,\]
\[[[x,y],z]+[[y,z],x]+[[z,x],y]=0.\]
%The second identity is called Jacobi identity.

The center of a Lie algebra $\mathfrak{g}$ is defined as
$$Z(\mathfrak{g})=\{x\in\mathfrak{g} \ | \ [x,y]=0,  \mbox{ for all } y\in\mathfrak{g}\}.$$

A linear operator $d$ on a Lie algebra $\mathfrak{g}$ is called a {\it derivation} if it satisfies the Leibniz rule, i.e., the identity
$$d([x,y])=[d(x),y]+[x,d(y)]$$
holds for any $x, y\in \mathfrak{g}.$ The set of all derivations of $\mathfrak{g}$ is denoted by ${\rm Der}(\mathfrak{g}).$
The linear operator ${\rm ad}_{x}$ defined by
${\rm ad}_{x}(y)=[x,y]$
is a derivation of $\mathfrak{g}$. Such derivations are called {\it inner derivations} and the set of all inner derivations is denoted by ${\rm Inn}(\mathfrak{g}).$

Let $\mathfrak{g}$ be a Lie algebra. A $\mathfrak{g}$-module is a vector space $M$ together with a homomorphism of $\mathfrak{g}$ into the Lie algebra of all linear operators on $M.$ We denote by $v.m$ the image of the element $m\in M$ under the linear operator which corresponds to the element $v\in\mathfrak{g}.$ The subspace of $M$ which consists of all $m\in M$ with $v.m=0$ for all $v\in\mathfrak{g}$ is denoted by $M^{\mathfrak{g}}.$
Set
$$C^0(\mathfrak{g},M)=M, \quad C^n(\mathfrak{g},M)={\rm Hom}(\wedge^{n}\mathfrak{g},M), \ n>0.$$

Namely, $C^n(\mathfrak{g},M)$ is the space of alternating $n$-multilinear maps. The elements of $C^n(\mathfrak{g},M)$ are called $n$-{\it cochains}. Now, define the differential $d^n:C^n(\mathfrak{g},M)\to C^{n+1}(\mathfrak{g},M).$ Let $\varphi\in C^n(\mathfrak{g},M)$ be an $n$-cochain. The image of the $n$-cochain $\varphi$ under the differential $d^n$ is defined as follows:
\begin{equation} \label{eq1} \begin{array}{ll}
d^n\varphi(e_0,\dots,e_n)&=\sum\limits_{i=0}^{n+1}(-1)^{i}x_i.\varphi(e_0,\dots,\hat{e}_i,\dots,e_n)\\[3mm]
&+\sum\limits_{1\leq i<j\leq n}(-1)^{i+j}\varphi([e_i,e_j],e_0,\dots,\hat{e}_i,\dots,\hat{e}_j,\dots,e_n),
\end{array}
\end{equation}
 where $e_0,\dots,e_n\in\mathfrak{g}$ and the sign $\hat{}$ indicates that the argument below it must be omitted. Set $Z^{n}(\mathfrak{g},M):={\rm Ker}(d^{n+1})$ and  $B^{n}(\mathfrak{g},M):={\rm Im}(d^{n}).$ The property $d^{n+1}\circ d^{n}=0$ implies that the total differential $d=\sum\limits_{i\geq0}d^i$ satisfies $d\circ d=0.$ Therefore, the $n$-th cohomology space
$$H^{n}(\mathfrak{g},M)=Z^{n}(\mathfrak{g},M)/B^{n}(\mathfrak{g},M)$$
is well-defined.

The elements of $Z^{n}(\mathfrak{g},M)$ and $B^{n}(\mathfrak{g},M)$ are called $n$-{\it cocycles} and $n$-{\it coboundaries}, respectively.
Note that, the second cocycles with values in the adjoint representation are called \textit{infinitesimal deformations}.
The central extensions of $\mathfrak{g}$ are classified by the second cohomology group with values in the trivial representation.

If $\mathfrak{r}$ is an ideal of $\mathfrak{g},$ then we define on each $C^n(\mathfrak{r},M)$ the structure of $\mathfrak{g}$-module. Because of $C^{0}(\mathfrak{r},M)=M,$ the $\mathfrak{g}$-action on $M$ defines $\mathfrak{g}$-module structure on $C^{0}(\mathfrak{r},M)$. For $n>0$, we define the action $\mathfrak{g}$ on  $C^n(\mathfrak{r},M)$ as follow:
\begin{align}{\label{action}}
(v.\omega)(e_1,\dots,e_n)=v.\omega(e_1,\dots,e_n)-\sum_{i=1}^{n}\omega(e_1,\dots,e_{i-1},[v,e_{i}],e_{i+1},\dots,e_n),
\end{align}
where $v \in \mathfrak{g},$ $\omega \in C^n(\mathfrak{r},M)$ and $e_1,\dots,e_n\in\mathfrak{r}$.

In general, the computation of cohomology spaces is complicated. However, if a Lie algebra is the semidirect product of a semisimple subalgebra and its radical, the Hochschild-Serre factorization theorem (Theorem 13 in \cite{HchSe}) simplifies the computation. Let $\mathfrak{g}=\mathfrak{s}\ltimes\mathfrak{r}$ be a Lie algebra, where $\mathfrak{s}$ is the semisimple part of $\mathfrak{g}$, and  $\mathfrak{r}$ is its radical, $M$ be a finite dimensional $\mathfrak{g}$-module. Then $H^p(\mathfrak{g},M)$ satisfies the following isomorphism of vector spaces
$$H^p(\mathfrak{g},M)\cong\sum_{m+n=p}H^m(\mathfrak{s},\mathbb{C})\otimes H^n(\mathfrak{r},M)^{\mathfrak{s}}.$$

Here, $H^d(\mathfrak{r},M)^{\mathfrak{s}}$ is the space of $\mathfrak{s}$-invariant cocycles of $\mathfrak{r}$ with values in $M$ which is defined as
$$H^d(\mathfrak{r},M)^{\mathfrak{s}}=\{[\varphi]\in H^d(\mathfrak{r},M) \ | \ \forall v\in\mathfrak{s}:  v.\varphi\in B^d(\mathfrak{r},M)\},$$
where $[\varphi]$ denotes the cohomology class of the $d$-cocycle $\varphi$ and the action $v.\varphi$ defined as \eqref{action}. Namely, since the $\mathfrak{g}$-action on $C^d(\mathfrak{r},M)$, as defined in \eqref{action}, is compatible with the differential, $Z^d(\mathfrak{r},M)$ and $B^d(\mathfrak{r},M)$ are invariant under the action of $\mathfrak{s}$, and therefore this action can be lifted to $H^d(\mathfrak{r},M)$.
Note that the following isomorphism holds for $H^d(\mathfrak{r},M)^{\mathfrak{s}}$:
\begin{align}{\label{iso}}
\nonumber
H^d(\mathfrak{r},M)^{\mathfrak{s}}&=[Z^d(\mathfrak{r},M)^{\mathfrak{s}}+B^d(\mathfrak{r},M)/B^d(\mathfrak{r},M)]\\
                                            &\simeq Z^d(\mathfrak{r},M)^{\mathfrak{s}}/[B^d(\mathfrak{r},M)\cap Z^d(\mathfrak{r},M)^{\mathfrak{s}}]\\
                                            \nonumber
                                            &= Z^d(\mathfrak{r},M)^{\mathfrak{s}}/B^d(\mathfrak{r},M)^{\mathfrak{s}}.
\end{align}

The first equality is based on the following fact: Since $Z^d(\mathfrak{r},M)$ is finite-dimensional, $Z^d(\mathfrak{r},M)$ is completely reducible as $\mathfrak{s}$-module. Therefore, for any cocycle $\varphi\in Z^d(\mathfrak{r},M)$ there exists an $\mathfrak{s}$-invariant cocycle $\varphi^{\prime}$ such that $\varphi-\varphi^{\prime}\in B^d(\mathfrak{r},M)$ (see Proposition 2.1 in \cite{WuZh}).

Now, let us introduce the $n$-th Schr\"{o}dinger algebra. Recall that the special linear Lie algebra $\mathfrak{sl}_2$ of traceless $2\times 2$ matrices is a simple Lie algebra with the following products for the standard basis $\{e,h,f\}$:
\begin{align*}
[e,f]=h, \quad  [h,e]=2e, \quad  [h,f]=-2f.
\end{align*}

It is known that general linear algebra $\mathfrak{gl}_{2n}$ has the natural representation on $\mathbb{C}^{2n}$ by left matrix multiplication. Let $\{e_1,e_2,\dots,e_{2n}\}$ be the standard basis of $\mathbb{C}^{2n}$.
 The Heisenberg algebra $\mathfrak{h}_n=\mathbb{C}^{2n}\oplus\mathbb{C}z$ is the Lie algebra with the following multiplication table:
$$[e_i,e_{n+i}]=z, \quad [z,\mathfrak{h}_{n}]=0.$$

The $n$-th Schr\"{o}dinger algebra $\mathfrak{sch}_n$ is the semidirect product of $\mathfrak{sl}_2$ and a Heisenberg algebra $\mathfrak{h}_n.$  Here, $\mathfrak{sl}_2$ is embedded in $\mathfrak{gl}_{2n}$ by the mapping
$$\begin{pmatrix}
a & b \\
c & -a
\end{pmatrix}\rightarrow
\begin{pmatrix}
aI_n & bI_n \\
cI_n & -aI_n
\end{pmatrix},$$
where $I_n$ is the $n\times n$ identity matrix, $\mathfrak{sl}_2$ acts on $\mathfrak{h}_n$ by matrix multiplication and $[z,\mathfrak{sch}_{n}]=0$. Now, we denote
$$h=
\begin{pmatrix}
I_n & 0 \\
0 & -I_n
\end{pmatrix}, \quad e=
\begin{pmatrix}
0 & I_n \\
0 & 0
\end{pmatrix}, \quad f=
\begin{pmatrix}
0 & 0 \\
I_n & 0
\end{pmatrix}, \quad x_k=e_k, \quad y_k=e_{n+k},$$
where $1\leq k\leq n$. Then, one can verify directly that the $n$-th Schr\"{o}dinger algebra $\mathfrak{sch}_n$ is a Lie algebra with a basis $\{e,h,f,x_i,y_i, \ 1\leq i\leq n\}$ equipped with the following non-trivial commutation relations:
\begin{align*}
[h,e]&=2e, &  [h,f]&=-2f & [e,f]&=h,\\
[h,x_i]&=x_i, & [h,y_i]&=-y_i,   &    [e,y_i]&=x_i, & [f,x_i]&=y_i,\\
[x_i,y_i]&=z.
\end{align*}

Thanks to \cite{LY}, we have the following description of the space of derivations of the algebra $\mathfrak{sch}_n$:
%The space of derivations of the algebra $\mathfrak{sch}_n$ is described in \cite{LY} and obtained that
$$\mathrm{Der}(\mathfrak{sch}_n)=\mathrm{Inn}(\mathfrak{sch}_n)  \oplus \bigoplus\limits_{1\leq i<j\leq n} \mathbb{C}\sigma_{ij}\bigoplus\mathbb{C}\tau.$$
where the outer derivations $\sigma_{ij}$ and  $\tau$ are defined as follows:
\begin{align*}
        \sigma_{ij}(h)&=\sigma_{ij}(e)=\sigma_{ij}(f)=\sigma_{ij}(z)=0,\\
        \sigma_{ij}(x_k)&=\delta_{ik}x_j-\delta_{jk}x_i,\qquad \sigma_{ij}(y_k)=\delta_{ik}y_j-\delta_{jk}y_i
\end{align*}
for $1\leq i<j\leq n$ and
\begin{align*}
       \tau(h)&=\tau(e)=\tau(f)=0,\ \tau(u_i)=u_i,\ \tau(v_i)=v_i,\ \tau(z)=2z,\  1\leq i\leq n.
\end{align*}

It should be noted that the difference between the $n$-th Schr\"{o}dinger algebra $\mathfrak{sch}_n$
and the Schr\"odinger algebra in $(n+1)$-dimensional space-time is that the semisimple part of the latter contains the orthogonal Lie algebra $\mathfrak{so}_{n}.$
In \cite{Ru}, the Schr\"odinger algebra in $(n+1)$-dimensional space-time and its factor algebra by one-dimensional center are denoted by $\widehat{S}(N)$ and $S(N),$ respectively.
The second cohomology groups of these algebras with values in the adjoint modules are computed, and it is proven that $$H^2(\widehat{S}(N), \widehat{S}(N)) = H^2(S(N), S(N))=0, \ \text{for} \  N \geq 3,$$ $$\dim H^2(\widehat{S}(2), \widehat{S}(2)) = \dim H^2(S(2), S(2))=2.$$

Results of the present work are similar to those in \cite{Ru}, specifically
we compute the second cohomology groups for the $n$-th Schr\"{o}dinger algebra $\mathfrak{sch}_n$ and its factor algebra by one-dimensional center with values in the adjoint module.
Moreover, we determine the second cohomology groups of these algebras with values in the trivial representation.

\section{Infinitesimal deformations and central extensions of the $n$-th Schr\"{o}dinger algebra}

\subsection{Central extensions of the $n$-th Schr\"{o}dinger algebra}
In this subsection, we compute the dimension of the second cohomology group of the $n$-th Schr\"{o}dinger algebra with values in the trivial representations.
In this case the first and second
 differentials \eqref{eq1} are given by
\begin{align}{\label{Z1trivial}} d^1\omega(x,y)=\omega([x,y])\end{align}
and
\begin{align}{\label{Z2trivial}}
d^2\varphi(x,y,z) = \varphi(x,[y,z])+\varphi(y,[z,x])+\varphi(z,[x,y]).
\end{align}

%Let $\varphi\in Z^2(\mathfrak{g},M_0).$
%Define an anticommutative product on $\mathfrak{g}_{\varphi}=\mathfrak{g}\oplus M_0$ as follows:
%$$[x+x_0,y+y_0]_{\varphi}=[x,y] + \varphi(x,y).$$
%Then one can verify that the algebra $\mathfrak{g}_{\varphi}$ is a Lie algebra which is called a central extension of $\mathfrak{g}$ by $M_0.$

Note that the second cohomology group for $n=1$ was determined in \cite{WuZh}, and it is shown that $ \dim(H^2(\mathfrak{sch}_1,\mathbb{C}))=0$.
Therefore, we consider the case of $n\geq2$.

\begin{thm}{\label{thm1}}
$ \dim(H^2(\mathfrak{sch}_n,\mathbb{C}))=\frac{n(n+1)}{2}-1$ for $n\geq2.$
\end{thm}

\begin{proof}
Since $\mathbb{C}$ is a trivial $\mathfrak{sl}_2$-module, we have
  $H^0(\mathfrak{sl}_2,\mathbb{C})\simeq\mathbb{C}.$ By Whitehead's first and second lemmas, we get
  $H^1(\mathfrak{sl}_2,\mathbb{C})=H^2(\mathfrak{sl}_2,\mathbb{C})=0$, and therefore the Hochschild-Serre factorization theorem (Theorem 13 in \cite{HchSe}) implies that
$$H^2(\mathfrak{sch}_n,\mathbb{C})\simeq H^2(\mathfrak{h}_n,\mathbb{C})^{\mathfrak{sl}_2}.$$

From the multiplication of the Heisenberg algebra $\mathfrak{h}_n,$ we easily get that $B^2(\mathfrak{h}_n,\mathbb{C})^{\mathfrak{sl}_2}=\langle \phi\rangle,$ where $\phi(x_i,y_i)=\omega([x_i, y_i]) = \omega(z)$ for some $\omega\in{\rm Hom}(\mathfrak{h}_n,\mathbb{C})$. Thus, $\dim(B^2(\mathfrak{h}_n,\mathbb{C})^{\mathfrak{sl}_2})=1.$

Let $\varphi\in Z^2(\mathfrak{h}_n,\mathbb{C})^{\mathfrak{sl}_2}.$ %Then, we have$v.\varphi(e_1,e_2) = \varphi([e_1, v], e_2)+\varphi(e_1, [e_2, v])=0$ for any  $v\in\mathfrak{s},$ $e_1, e_2 \in\mathfrak{h}.$
Considering $h.\varphi(x_i,z)=0$ and $h.\varphi(y_i,z)=0,$ we obtain $\varphi(x_i, z) = 0$ and $\varphi(y_i, z)=0$ for $i\in\{1,2,\dots,n\}.$
Next, from $h.\varphi(x_i,x_j)=0$ and $h.\varphi(y_i,y_j)=0,$ we get $\varphi(x_i, x_j) = 0$ and $\varphi(y_i, y_j)=0$ for any $i, j \in\{1,2,\dots,n\}.$
Moreover, the equality $$0=e.\varphi(y_i,y_j)= \varphi([e,y_i], y_j)+\varphi(y_i, [e,y_j])=\varphi(x_i, y_j)+\varphi(y_i, x_j)$$
implies that $\varphi(x_i, y_j)=\varphi(x_j, y_i)$ for $i\neq j.$
Summarizing the relations above, we deduce that $\dim(Z^2(\mathfrak{h}_n,\mathbb{C})^{\mathfrak{sl}_2})=\frac{n(n+1)}{2}$. Thus, by \eqref{iso} we have
$H^2(\mathfrak{h}_n,\mathbb{C})^{\mathfrak{sl}_2}\simeq Z^2(\mathfrak{h}_n,\mathbb{C})^{\mathfrak{sl}_2}/B^2(\mathfrak{h}_n,\mathbb{C})^{\mathfrak{sl}_2}.$ Hence, $ \dim(H^2(\mathfrak{sch}_n,\mathbb{C}))=\frac{n(n+1)}{2}-1$.

%\begin{align*}H^2(\mathfrak{h}_n,\mathbb{C})^{\mathfrak{sl}_2}&=[Z^2(\mathfrak{h}_n,\mathbb{C})^{\mathfrak{sl}_2}+B^2\mathfrak{h}_n,\mathbb{C})/B^2(\mathfrak{h}_n,\mathbb{C})]\\                                            &\simeq Z^2(\mathfrak{h}_n,\mathbb{C})^{\mathfrak{sl}_2}/[B^2(\mathfrak{h}_n,\mathbb{C})\cap Z^2(\mathfrak{h}_n,\mathbb{C})^{\mathfrak{sl}_2}]\\                                            &= Z^2(\mathfrak{h}_n,\mathbb{C})^{\mathfrak{sl}_2}/B^2(\mathfrak{h}_n,\mathbb{C})^{\mathfrak{sl}_2}.\end{align*}

\end{proof}

Now, let $\varphi_{ij}$ $(i,j\in\{1,2,\dots,n\})$ be anti-symmetric bilinear forms defined as follows:
\begin{align}
 \varphi_{ij}(x_k,y_l)
 {\label{fij2}}
&=\left\{
\begin{array}{ccclll}
1, & \mbox{ if } \ (i,j)=(k,l) \mbox{ or } (i,j)=(l,k)\\
0, & \mbox{ otherwise},
\end{array}
 \right. \\
{\label{fij}}
\varphi_{ij}(x_k,x_l)&=\varphi_{ij}(y_k,y_l)=0,\\
{\label{fij0}}
\varphi_{ij}(x_k,z)&=\varphi_{ij}(y_k,z)=0.
 \end{align}

From the proof of Theorem \ref{thm1}, it is not difficult to obtain the following corollary.

\begin{cor}
The set of anti-symmetric bilinear forms $\varphi_{ij}\ (\ i,j\in\{1,2,\dots,n\})$ satisfying  \eqref{fij2}, \eqref{fij} and \eqref{fij0} form a basis for $Z^2(\mathfrak{sch}_n,\mathbb{C})$.
\end{cor}

%\textcolor{red}{\begin{cor}Any 1-dimensional central extension of the $n$-th Schr\"{o}dinger algebra is split.\end{cor}}

\subsection{Infinitesimal deformations of the $n$-th Schr\"{o}dinger algebra}

In this subsection, we  determine the second cohomology group of $\mathfrak{sch}_n$ with values in the adjoint
representation. In \cite{WuZh}, it is proven that if $V$ is a finite-dimensional irreducible $\mathfrak{sch}_1$-module and $\dim (V)\neq 2$, then $H^2(\mathfrak{sch}_1,V)=0$. The algebra $\mathfrak{sch}_1$ is itself an irreducible $\mathfrak{sch}_1$-module and $\dim(\mathfrak{sch}_1)=6$, and therefore $H^2(\mathfrak{sch}_1,\mathfrak{sch}_1)=0$.
Thus, we consider the case of $n \geq 2.$

For the adjoint representation the first and second differentials \eqref{eq1} are given by
\begin{align}{\label{eq4.1}}d^1\omega(x,y)=[\omega(x),y]+[x,\omega(y)]-\omega([x,y])\end{align}
and
\begin{align}{\label{eq4.2}}
d^2\varphi(x,y,z) = [x,\varphi(y,z)]+[y,\varphi(z,x)]+[z,\varphi(x,y)]+\varphi(x,[y,z])+\varphi(y,[z,x])+\varphi(z,[x,y]).
\end{align}

Thus, the linear space of second cocycles $Z^2(\mathfrak{g},\mathfrak{g})$ is the set of alternating bilinear maps satisfying the condition $d^2\varphi(x,y,z)=0$.
The subspace $B^2(\mathfrak{g},\mathfrak{g})$ is defined as follows:
$$B^2(\mathfrak{g},\mathfrak{g})=\{\phi \ | \ \exists \omega\in C^1(\mathfrak{g},\mathfrak{g}): \ \phi(x,y)=d^1\omega(x,y)\}.$$

%\begin{lem}
%Let $s_{ij}\in\mathfrak{so}_n$ be a fixed element. Then, $C_{\mathfrak{so}_n}(\langle s_{rt}\rangle)=\langle s_{ij}\rangle,$ where $r\neq i,j, \ t\neq i,j.$
%\end{lem}

\begin{thm}{\label{mainthm}}
$ H^2(\mathfrak{sch}_n,\mathfrak{sch}_n)=0$ for $n\geq3.$
\end{thm}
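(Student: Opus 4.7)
The plan is to apply the Hochschild--Serre factorization theorem to the decomposition $\mathfrak{sch}_n = \mathfrak{sl}_2 \ltimes \mathfrak{h}$. Since $H^1(\mathfrak{sl}_2,\mathbb{C}) = H^2(\mathfrak{sl}_2,\mathbb{C}) = 0$, the theorem collapses to
$$H^2(\mathfrak{sch}_n,\mathfrak{sch}_n) \cong H^2(\mathfrak{h},\mathfrak{sch}_n)^{\mathfrak{sl}_2},$$
so the problem reduces to computing the $\mathfrak{sl}_2$-invariant part of the adjoint cohomology of the Heisenberg algebra.

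I would then parameterize the invariant $2$-cochains using the $\mathfrak{sl}_2$-module decompositions $\mathfrak{h} \cong V_1^{\oplus n} \oplus V_0$, where each $V_1 = \langle x_i,y_i\rangle$ is the two-dimensional irreducible and $V_0 = \langle z\rangle$, and $\mathfrak{sch}_n \cong V_2 \oplus V_1^{\oplus n} \oplus V_0$. Decomposing $\wedge^2 \mathfrak{h}$ into its isotypic components and invoking Schur's lemma, a general invariant $\varphi$ is determined by scalars $a_i,\ b_{ij},\ c_{ij},\ \alpha_{ik}$ via
$$\varphi(x_i,y_i) = a_i z, \quad \varphi(x_i,x_j) = c_{ij} e, \quad \varphi(y_i,y_j) = c_{ij} f, \quad \varphi(x_i,y_j) = \tfrac{1}{2}(-c_{ij}h + b_{ij}z)$$
for $i<j$, together with $\varphi(x_i,z) = \sum_k \alpha_{ik} x_k$ and $\varphi(y_i,z) = \sum_k \alpha_{ik} y_k$.

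The core step is to impose the cocycle identity $d^2\varphi = 0$ on a small collection of triples from $\mathfrak{h}$. Evaluating on $(x_i,y_i,z)$ and $(x_i,y_j,z)$ yields $\alpha_{ii} = 0$ and $\alpha_{ij} + \alpha_{ji} = 0$; evaluating on $(x_i,x_j,y_j)$ for $i<j$ produces the two constraints $\alpha_{ik} = 0$ for every $k \neq j$ and $\alpha_{ij} = \tfrac{3}{2} c_{ij}$. This is where the hypothesis $n \geq 3$ enters decisively: with two distinct indices $j,j' \neq i$ available, the conditions coming from $(x_i,x_j,y_j)$ and $(x_i,x_{j'},y_{j'})$ together force $\alpha_{ij} = 0$ for every $j$, and hence $c_{ij} = 0$ for every pair $i<j$. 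A direct check confirms that the remaining triples (for instance $(x_i,x_j,y_k)$ with three distinct indices) impose no further restrictions, so the surviving invariant cocycles take the form $\varphi(x_i,y_j) = \mu_{ij} z$ with $\mu_{ij} = \mu_{ji}$, all other basis evaluations being zero.

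Finally, I would realize each such residual cocycle as a coboundary by constructing the invariant $1$-cochain $\omega$ given by $\omega(x_i) = \sum_k \tfrac{\mu_{ik}}{2} x_k$, $\omega(y_i) = \sum_k \tfrac{\mu_{ik}}{2} y_k$, $\omega(z) = 0$, and verifying $d^1\omega = \varphi$ on basis pairs using formula \eqref{eq4.1}. The principal obstacle is the combinatorial bookkeeping: the $2$-cocycle condition must be tested on enough triples to extract every constraint, while $\mathfrak{sl}_2$-equivariance is used at each stage to collapse the parameter count. The critical reduction, elimination of all $\alpha_{ij}$ and $c_{ij}$, depends essentially on having two auxiliary indices, which is precisely what fails in the borderline case $n=2$ and explains the different outcome there.
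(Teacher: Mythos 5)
Your proposal is correct and follows essentially the same route as the paper: the Hochschild--Serre reduction to $H^2(\mathfrak{h},\mathfrak{sch}_n)^{\mathfrak{sl}_2}$ followed by a direct determination of the $\mathfrak{sl}_2$-invariant cocycles, with only cosmetic differences of execution (you test the cocycle identity on the triples $(x_i,y_j,z)$ and $(x_i,x_j,y_j)$ where the paper uses $(x_i,y_j,y_k)$ and $(x_i,y_i,y_j)$, and you exhibit the surviving cocycles $\varphi(x_i,y_j)=\mu_{ij}z$ as explicit coboundaries rather than matching the dimension counts $\dim Z^2=\dim B^2=\tfrac{n(n+1)}{2}$ of Lemmas \ref{mainlem1} and \ref{mainlem2}). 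One immaterial slip: with your normalizations $\varphi(x_i,x_j)=c_{ij}e$ and $\varphi(x_i,y_j)=\tfrac12(-c_{ij}h+b_{ij}z)$, $\mathfrak{sl}_2$-invariance forces $\varphi(y_i,y_j)=-c_{ij}f$ rather than $+c_{ij}f$, but this coefficient never enters the triples you evaluate and $c_{ij}=0$ in the end, so the argument is unaffected.
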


By \eqref{action}, we have
\begin{align} \label{eq4.3} v.\varphi(e_1,e_2) = [v, \varphi(e_1,e_2)] - \varphi([v, e_1], e_2) - \varphi(e_1, [v,e_2]) \quad \text{for} \quad v\in\mathfrak{sl}_2, \ e_1, e_2 \in\mathfrak{h}_n.
\end{align}

%The space of $\mathfrak{sl}_2$-invariant cocycles of $\mathfrak{h}$ with values in $\mathfrak{sch}_n$ is
%$$H^2(\mathfrak{h},{\mathfrak{sch}_n})^{\mathfrak{sl}_2}=\{\varphi\in H^2(\mathfrak{h},\mathfrak{sch}_n) \ | \  v.\varphi=0, \ v\in \mathfrak{sl}_2\}.$$

At first, let us prove the following auxiliary lemmas:

\begin{lem}{\label{mainlem1}}
$ \dim(B^2(\mathfrak{h}_n,\mathfrak{sch}_n)^{\mathfrak{sl}_2})=\frac{n(n+1)}{2}$ for $n\geq2.$
\end{lem}

\begin{proof}

Let $\omega\in C^{1}(\mathfrak{h}_n,\mathfrak{sch}_n),$ then the action \eqref{action} has the form
\begin{align}
v.\omega(x)&=[v, \omega(x)] - \omega([v,x]) \quad \text{for} \quad v \in \mathfrak{sl}_2, x \in \mathfrak{h}_n.
\end{align}

First, considering $v.\omega(z)=0$ for any $v\in \mathfrak{sl}_2,$ we have
$$0=[v,\omega(z)]-\omega([v,z])=[v,\omega(z)].$$

Taking into account that $Z(\mathfrak{sl}_2)=0$ and $Z(\mathfrak{sch}_n)=\langle z\rangle$, we obtain $\omega(z)\in\langle z\rangle.$

Now, applying the argument $h.\omega(x_i)=0$ on the elements $h\in\mathfrak{sl}_2,$ $x_i\in\mathfrak{h}_n$, we get
$$\omega(x_i)=\omega([h,x_i])=[h,\omega(x_i)].$$

Thus, $\omega(x_i)$ is an eigenvector for the operator $\operatorname{ad}_h$ with eigenvalue 1. Such
eigenvectors are $x_1,x_2,\dots,x_n.$ Hence, we obtain $\omega(x_i)\in\langle x_1,x_2,\dots,x_n\rangle$ for all $i\in\{1,2,\dots,n\}.$
Similarly, considering  $h.\omega(y_i)=0,$ we obtain
$\omega(y_i)=-\omega([h,y_i])=-[h,\omega(y_i)],$ which implies $\omega(y_i)\in\langle y_1,y_2,\dots,y_n\rangle$ for all $i\in\{1,2,\dots,n\}.$
Therefore, we can set
\begin{align} \label{eq3.7}
\omega(x_i)=\sum_{j=1}^{n}\alpha_{i,j}x_j,  \quad  \omega(y_i)=\sum_{j=1}^{n}\beta_{i,j}y_j, \quad \omega(z)=\gamma z.
\end{align}

From $e.\omega(y_i)=0$, we have
\begin{align*}\omega(x_i)&=\omega([e,y_i])=[e,\omega(y_i)]=\sum_{j=1}^{n}\beta_{i,j}[e,y_j]=\sum_{j=1}^{n}\beta_{i,j}x_j,\end{align*}
which implies  \begin{align} \label{eq3.8}\alpha_{i,j}=\beta_{i,j}.\end{align}

Thus, from \eqref{eq3.7} and \eqref{eq3.8} we can conclude that $\dim(C^{1}(\mathfrak{h}_n,\mathfrak{sch}_n)^{\mathfrak{sl}_2})=n^2+1$.
However, $\sigma_{i,j}, \ 1\leq i<j\leq n$ and $\tau$ are the only derivations which satisfy \eqref{eq3.8} and are described in \cite{LY}. Therefore, we have $\dim({\rm Der}(\mathfrak{h}_n,\mathfrak{sch}_n)^{\mathfrak{sl}_2})=\frac{n(n-1)}{2}+1$. Hence,
$$\dim (B^2(\mathfrak{h}_n,\mathfrak{sch}_n)^{\mathfrak{sl}_2})=\dim(C^{1}(\mathfrak{h}_n,\mathfrak{sch}_n)^{\mathfrak{sl}_2})-\dim({\rm Der}(\mathfrak{h}_n,\mathfrak{sch}_n)^{\mathfrak{sl}_2})=\frac{n(n+1)}{2}.$$
\end{proof}

\begin{lem}{\label{mainlem2}}
$ \dim(Z^2(\mathfrak{h}_n,\mathfrak{sch}_n)^{\mathfrak{sl}_2})=\frac{n(n+1)}{2}$ for $n\geq3.$
\end{lem}

\begin{proof}
For the central element $z\in\mathfrak{h}_n,$ define a linear mapping $\phi:\mathfrak{h}_n\rightarrow \mathfrak{sch}_n$ as follows:
$$\phi(-)=\varphi(-,z).$$

 Then the $\mathfrak{sl}_2$-invariance of $\varphi$ under the action \eqref{eq4.3}
 implies that
\begin{align*}
[v,\varphi(a,z)]=\varphi([v,a],z) \ \mbox{ or } \ [v,\phi(a)]=\phi([v,a]),
\end{align*}
where $v\in\mathfrak{sl}_2,$ $a\in\mathfrak{h}_n$ and $z$ is the central element. In other words, $\phi$ is invariant with respect to the subalgebra $\mathfrak{sl}_2$.
Thus, according to \eqref{eq3.7} and \eqref{eq3.8},
we conclude that
\begin{align*}
\phi(x_i)=\sum_{j=1}^{n}\alpha_{i,j}x_j,  \quad  \phi(y_i)=\sum_{j=1}^{n}\alpha_{i,j}y_j, \quad \phi(z)=\gamma z.
\end{align*}

Hence,\begin{align*}
\varphi(x_i,z)=\sum_{j=1}^{n}\alpha_{i,j}x_j,  \quad  \varphi(y_i,z)=\sum_{j=1}^{n}\alpha_{i,j}y_j.
\end{align*}

Now, considering the $\mathfrak{sl}_2$-invariance of $\varphi$ with respect to $h,$ i.e.,  $h.\varphi(x_i,x_j)=0,$ we obtain
$$[h,\varphi(x_i,x_j)]=\varphi([h,x_i],x_j)+\varphi(x_i,[h,x_j])=2\varphi(x_i,x_j).$$

Then we conclude that elements $\varphi(x_i,x_j)$ are eigenvectors of the operator $\operatorname{ad}_h$ with eigenvalue $2.$ Since, such eigenvector of $\operatorname{ad}_h$ is only $e,$ we derive that $\varphi(x_i,x_j)\in\langle e\rangle.$
Similarly, one can show that $\varphi(y_i,y_j)=\langle f\rangle,$ $i,j\in\{1,2,\dots,n\}.$

Next, applying the condition $h.\varphi(x_i,y_j)=0,$ we have
$$[h,\varphi(x_i,y_j)]=\varphi([h,x_i],y_j)+\varphi(x_i,[h,y_j])=0.$$

Hence, we can set  $\varphi(x_i,y_j)=\beta_{i,j}h+\gamma_{i,j}z$. Then, from
$$-2\beta_{i,j}e=[e,\varphi(x_i,y_j)]=\varphi([e,x_i],y_j)+\varphi(x_i,[e,y_j])=\varphi(x_i,x_j),$$
we obtain $\varphi(x_i,x_j)=-2\beta_{i,j}e.$ By similar computations, we can obtain that $\varphi(y_i,y_j)=2\beta_{i,j}f.$
Further, applying the condition $e.\varphi(y_i,y_j)=0,$ we have
$$[e,\varphi(y_i,y_j)]=\varphi([e,y_i],y_j)+\varphi(y_i,[e,y_j])=\varphi(x_i,y_j)+\varphi(y_i,x_j),$$
which follows $\gamma_{i,j}=\gamma_{j,i}.$ Furthermore, the relation $f.\varphi(x_i,y_i)=0$ implies that
$$2\beta_{i,i}f=[f,\varphi(x_i,y_i)]=\varphi([f,x_i],y_i)+\varphi(x_i,[f,y_i])=\varphi(y_i,y_i)=0.$$
Hence, $\beta_{i,i}=0$ that is $\varphi(x_i,y_i)=\gamma_{i,i}z$.

Now, we use the condition $d^2\varphi(x,y,z)=0$ for $x,y,z\in\mathfrak{h}_n$.
Then, for $x_i,y_j,y_k\in\mathfrak{h}_n$ we have
\begin{align*}
0&=d^2(x_i,y_j,y_k)\\
&=[x_i,\varphi(y_j,y_k)]+[y_j,\varphi(y_k,x_i)]+[y_k,\varphi(x_i,y_j)]+\varphi(x_i,[y_j,y_k])+\varphi(y_j,[y_k,x_i])+\varphi(y_k,[x_i,y_j])\\
&=2\beta_{j,k}[x_i,f]-\beta_{i,k}[y_j,h]+\beta_{i,j}[y_k,h]\\
&=-2\beta_{j,k}y_i-\beta_{i,k}y_j+\beta_{i,j}y_k.
\end{align*}
Thus, we obtain $\beta_{i,j}=0$ for $1\leq i\neq j\leq n$, that is, $\varphi(x_i,x_j)=0$ for $i\neq j$.
Moreover, for $x_i,y_i,y_j\in\mathfrak{h}_n$ we have
\begin{align*}
0&=d^2(x_i,y_i,y_j)\\
&=[x_i,\varphi(y_i,y_j)]+[y_i,\varphi(y_j,x_i)]+[y_j,\varphi(x_i,y_i)]+\varphi(x_i,[y_i,y_j])+\varphi(y_i,[y_j,x_i])+\varphi(y_j,[x_i,y_i]),\\
&=\varphi(y_j,z),
\end{align*}
which follows that  $\varphi(y_j,z)=0$ for $i,j\in\{1,2,\dots,n\}$.
Similarly, one can show that $\varphi(x_j,z)=0$ for $1\leq j\leq n$.
Therefore, we obtain that
\begin{align*}
\varphi(x_i,y_j)&=\gamma_{i,j} z, & \varphi(x_i,x_j)&=0, & \varphi(y_i,y_j)&=0,\\
\varphi(x_i,z)&=0, & \varphi(y_i,z)&=0.
\end{align*}
Thus, $\dim(Z^2(\mathfrak{h}_n,\mathfrak{sch}_n)^{\mathfrak{sl}_2})=\frac{n(n+1)}{2}.$
\end{proof}

%\newpage

\begin{proof}[Proof of Theorem \ref{mainthm}] By the Hochschild-Serre factorization theorem (Theorem 13 in \cite{HchSe}), we have
$$H^2(\mathfrak{sch}_n,\mathfrak{sch}_n)\simeq
H^0(\mathfrak{sl}_2,\mathbb{C})\otimes H^2(\mathfrak{h}_n,\mathfrak{sch}_n)^{\mathfrak{sl}_2}\oplus
H^1(\mathfrak{sl}_2,\mathbb{C})\otimes H^1(\mathfrak{h}_n,\mathfrak{sch}_n)^{\mathfrak{sl}_2}\oplus
H^2(\mathfrak{sl}_2,\mathbb{C})\otimes H^0(\mathfrak{h}_n,\mathfrak{sch}_n)^{\mathfrak{sl}_2}.$$
Taking $H^0(\mathfrak{sl}_2,\mathbb{C})\simeq\mathbb{C}, \ H^1(\mathfrak{sl}_2,\mathbb{C})=H^2(\mathfrak{sl}_2,\mathbb{C})=0$ into account we get
$$H^2(\mathfrak{sch}_n,\mathfrak{sch}_n)\simeq
\mathbb{C}\otimes H^2(\mathfrak{h}_n,\mathfrak{sch}_n)^{\mathfrak{sl}_2}\simeq H^2(\mathfrak{h}_n,\mathfrak{sch}_n)^{\mathfrak{sl}_2}.$$

On the other hand, \eqref{iso} follows that $H^2(\mathfrak{h}_n,\mathfrak{sch}_n)^{\mathfrak{sl}_2} \simeq  Z^2(\mathfrak{h}_n,\mathfrak{sch}_n)^{\mathfrak{sl}_2}/B^2(\mathfrak{h}_n,\mathfrak{sch}_n)^{\mathfrak{sl}_2}.$
%\begin{align*} H^2(\mathfrak{h}_n,\mathfrak{sch}_n)^{\mathfrak{sl}_2}&=[Z^2(\mathfrak{h}_n,\mathfrak{sch}_n)^{\mathfrak{sl}_2} +B^2\mathfrak{h}_n,\mathfrak{sch}_n)/B^2(\mathfrak{h}_n,\mathfrak{sch}_n)]\\                                             &\simeq Z^2(\mathfrak{h}_n,\mathfrak{sch}_n)^{\mathfrak{sl}_2}/[B^2(\mathfrak{h}_n,\mathfrak{sch}_n)\cap Z^2(\mathfrak{h}_n,\mathfrak{sch}_n)^{\mathfrak{sl}_2}]\\                                             &= Z^2(\mathfrak{h}_n,\mathfrak{sch}_n)^{\mathfrak{sl}_2}/B^2(\mathfrak{h}_n,\mathfrak{sch}_n)^{\mathfrak{sl}_2}.\end{align*}
Moreover, Lemma \ref{mainlem1} and \ref{mainlem2} imply that
$$ \dim(H^2(\mathfrak{h}_n,\mathfrak{sch}_n)^{\mathfrak{sl}_2})
=\dim(Z^2(\mathfrak{h}_n,\mathfrak{sch}_n)^{\mathfrak{sl}_2})-\dim(B^2(\mathfrak{h}_n,\mathfrak{sch}_n)^{\mathfrak{sl}_2})=0.$$

Hence, $ H^2(\mathfrak{sch}_n,\mathfrak{sch}_n)=0$.
\end{proof}

By Theorem 7.1 in \cite{NR} one can obtain the following corollary:

\begin{cor}{\label{pr}} The $n$-th Schr\"{o}dinger algebra $\mathfrak{sch}_n$ is rigid for  $ n \geq 3.$
\end{cor}

Now, we consider the case $n=2.$

\begin{prop}{\label{prop}} $ \dim(H^2(\mathfrak{sch}_2,\mathfrak{sch}_2))=1.$
\end{prop}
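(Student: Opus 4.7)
The plan is to mirror the strategy of Theorem \ref{mainthm}, applying the Hochschild-Serre factorization to reduce the problem to computing $H^2(\mathfrak{h},\mathfrak{sch}_2)^{\mathfrak{sl}_2}$. Since $H^0(\mathfrak{sl}_2,\mathbb{C})\simeq \mathbb{C}$ and $H^1(\mathfrak{sl}_2,\mathbb{C})=H^2(\mathfrak{sl}_2,\mathbb{C})=0$, the isomorphism $H^2(\mathfrak{sch}_2,\mathfrak{sch}_2)\simeq H^2(\mathfrak{h},\mathfrak{sch}_2)^{\mathfrak{sl}_2}$ holds exactly as in the $n\geq 3$ case, and Lemma \ref{mainlem1} already gives $\dim(B^2(\mathfrak{h},\mathfrak{sch}_2)^{\mathfrak{sl}_2})=3$. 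Thus it suffices to show that $\dim(Z^2(\mathfrak{h},\mathfrak{sch}_2)^{\mathfrak{sl}_2})=4$.

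The $\mathfrak{sl}_2$-invariance analysis carried out in the first half of Lemma \ref{mainlem2} uses only the $\mathfrak{sl}_2$-action on $\mathfrak{h}$, so it transfers verbatim to $n=2$. I would therefore adopt the same parameterization
\begin{align*}
\varphi(x_i,y_j)&=\beta_{i,j}h+\gamma_{i,j}z, & \varphi(x_i,x_j)&=-2\beta_{i,j}e, & \varphi(y_i,y_j)&=2\beta_{i,j}f,\\
\varphi(x_i,z)&=\sum_{k=1}^{2}\alpha_{i,k}x_k, & \varphi(y_i,z)&=\sum_{k=1}^{2}\alpha_{i,k}y_k,
\end{align*}
subject to $\beta_{i,j}=-\beta_{j,i}$, $\beta_{i,i}=0$ and $\gamma_{i,j}=\gamma_{j,i}$. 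For $n=2$ this yields eight free scalars: three from the symmetric matrix $(\gamma_{i,j})$, one from $\beta_{1,2}$, and four from $(\alpha_{i,j})$.

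Next I would impose the remaining cocycle conditions $d^2\varphi(a,b,c)=0$ for triples $a,b,c\in\mathfrak{h}$. The step that decisively eliminated $\beta$ in Lemma \ref{mainlem2} was the identity $d^2\varphi(x_i,y_j,y_k)=-2\beta_{j,k}y_i+\beta_{i,k}y_j-\beta_{i,j}y_k=0$ applied with three distinct indices, and precisely this triple is unavailable when $n=2$. With only two indices the relations obtained from $d^2\varphi(x_1,x_2,y_i)$, $d^2\varphi(y_1,y_2,x_i)$, $d^2\varphi(x_i,y_j,z)$ and their companions become weaker and couple $\beta_{1,2}$ to the $\alpha_{i,j}$. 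My expectation is that, after using antisymmetry of $\beta$, these relations cut the eight-parameter space down to exactly four dimensions, keeping the three $\gamma$-parameters together with one additional surviving linear combination.

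The main obstacle will be the careful bookkeeping in this last step, as several of the cocycle identities overlap and one must verify that exactly one non-trivial combination among $\beta_{1,2}$ and the $\alpha_{i,j}$ is not forced to zero, nor absorbed into the coboundary subspace of Lemma \ref{mainlem1}. I would organize the calculation by separating the identities according to how many copies of $z$ appear in the triple, first handling the $z$-free identities to pin down $\beta_{1,2}$ and the $\gamma_{i,j}$, and then the identities containing $z$ to pin down $(\alpha_{i,j})$. Matching the resulting four-dimensional $Z^2(\mathfrak{h},\mathfrak{sch}_2)^{\mathfrak{sl}_2}$ against the three-dimensional $B^2(\mathfrak{h},\mathfrak{sch}_2)^{\mathfrak{sl}_2}$ will then yield $\dim H^2(\mathfrak{sch}_2,\mathfrak{sch}_2)=1$.
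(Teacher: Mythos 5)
Your proposal follows essentially the same route as the paper: Hochschild--Serre reduction to $H^2(\mathfrak{h},\mathfrak{sch}_2)^{\mathfrak{sl}_2}$, reuse of the $\mathfrak{sl}_2$-invariance parameterization from Lemma~\ref{mainlem2} (correctly noting that the three-distinct-index triple $d^2\varphi(x_i,y_j,y_k)$ which kills $\beta$ for $n\ge 3$ is unavailable), and then the cocycle identities to cut the eight parameters down to four against the three-dimensional coboundary space of Lemma~\ref{mainlem1}. The outcome you predict --- the three symmetric $\gamma_{i,j}$ surviving together with one extra combination involving $\beta_{1,2}$ and the $\alpha_{i,j}$ --- is exactly what the paper's explicit bookkeeping confirms, so only the routine final computation remains to be written out.
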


\begin{proof}

Let $\varphi\in Z^2(\mathfrak{h}_2,\mathfrak{sch}_2)^{\mathfrak{sl}_2}.$ From
\begin{align*}
[h,\varphi(x_i, z)]&=\varphi([h,x_i],z)+\varphi(x_i,[h,z])=\varphi(x_i,z),\\
[h,\varphi(y_i, z)]&=\varphi([h,y_i],z)+\varphi(y_i,[h,z])=-\varphi(y_i,z),
\end{align*}
we obtain that $\varphi(x_i,z)\in\langle x_1,x_2\rangle$ and $\varphi(y_i,z)\in\langle y_1,y_2\rangle$ for $i=1,2.$
Moreover, from
\begin{align*}
[h,\varphi(x_1,x_2)]&=\varphi([h,x_1],x_2)+\varphi(x_1,[h,x_2])=2\varphi(x_1,x_2),\\
[h,\varphi(y_1,y_2)]&=\varphi([h,y_1],y_2)+\varphi(y_1,[h,y_2])=-2\varphi(y_1,y_2),
\end{align*}
we can easily get that $\varphi(x_1,x_2)\in\langle e\rangle$ and  $\varphi(y_1,y_2)\in\langle f\rangle.$

Furthermore, from
\begin{align*}[h,\varphi(x_1,y_2)]=\varphi([h,x_1],y_2)+\varphi(x_1,[h,y_2])=\varphi(x_1,y_2)-\varphi(x_1,y_2)=0,\\
[h,\varphi(x_2,y_1)]=\varphi([h,x_2],y_1)+\varphi(x_2,[h,y_1])=\varphi(x_2,y_1)-\varphi(x_2,y_1)=0,\end{align*}
one can verify that $\varphi(x_1,y_2), \varphi(x_2,y_1)\in\langle h,z\rangle.$

Finally, by the relations
 \begin{align*}
 [e,\varphi(x_i,y_i)]&=\varphi([e,x_i],y_i)+\varphi(x_i,[e,y_i])=\varphi(x_i,x_i)=0,\\
 [f,\varphi(x_i,y_i)]&=\varphi([f,x_i],y_i)+\varphi(x_i,[f,y_i])=\varphi(y_i,y_i)=0,
 \end{align*}
we obtain $\varphi(x_1,y_1),\varphi(x_2,y_2)\in\langle z\rangle.$

Therefore, we can set
\begin{align*}
\varphi(x_1,x_2)&=\alpha_1 e, & \varphi(x_1,y_1)&=\alpha_2 z, & \varphi(x_1,y_2)&=\alpha_3 h+ \alpha_4z,\\
\varphi(y_1,y_2)&=\beta_1 f, & \varphi(x_2,y_2)&=\beta_2 z,& \varphi(x_2,y_1)&=\beta_3 h+ \beta_4z, \\
\varphi(x_1,z)&=\gamma_1 x_1+\gamma_2 x_2, & \varphi(y_1, z)&=\mu_1 y_1+\mu_2 y_2, & &\\
\varphi(x_2,z)&=\gamma_3 x_1+\gamma_4 x_2, & \varphi(y_2, z)&=\mu_3 y_1+\mu_4 y_2, & &
\end{align*}

Moreover, taking $e.\varphi(y_i,z)=0$ into account, we can get
 $\mu_j = \gamma_j$ for $1 \leq j \leq 4.$
%the equality
%$$[e,\varphi(s_{1,2},y_i)]=\varphi([e,s_{1,2}],y_i)+\varphi(s_{1,2},[e,y_i])=\varphi(s_{1,2},x_i),$$
%$$[e,\varphi(y_i,z)]=\varphi([e,y_i],z)+\varphi(y_i,[e,z])=\varphi(x_i,z),$$
%we obtain
%$\eta_j = \rho_j$ and $\mu_j = \gamma_j$ for $1 \leq j \leq 4.$
On the other hand, from $e.\varphi(x_1,y_2)=0$ and $f.\varphi(x_1,y_2)=0,$ we obtain $\alpha_1 = -2\alpha_3,$ $\beta_1 = 2\alpha_3.$
%$$[e,\varphi(x_1,y_2)]=\varphi([e,x_1],y_2)+\varphi(x_1,[e,y_2])=\varphi(x_1,x_2),$$
%$$[f,\varphi(x_1,y_2)]=\varphi([f,x_1],y_2)+\varphi(x_1,[f,y_2])=\varphi(y_1,y_2),$$
%we get $\alpha_1 = -2\alpha_4,$ $\beta_1 = 2\alpha_4.$
Next, considering
$$[f,\varphi(x_1,x_2)]=\varphi([f,x_1],x_2)+\varphi(x_1,[f,x_2])=\varphi(y_1,x_2)+\varphi(x_1,y_2),$$
we derive $\alpha_1=\beta_3-\alpha_3$ and $\beta_4=\alpha_4.$ Hence, we get $\beta_3=-\alpha_3$.

Now, we use the condition that $d^2\varphi(a, b, c) =0$ for $a, b, c \in \mathfrak{h}_2.$
First, considering $d^2\varphi(x_1,y_1,y_2)=0,$ we have
$$\varphi(x_1,[y_1,y_2])+\varphi(y_1,[y_2,x_1])+\varphi(y_2,[x_{1},y_1])+[x_{1},\varphi(y_1,y_2)]+[y_1,\varphi(y_2,x_{1})]+[y_2,\varphi(x_{1},y_1)]=0,$$
which follows
\[\varphi(y_2,z)+\beta_1[x_1,f]-\alpha_3[y_1,h]=0.\]
Thus, we get $\varphi(y_2,z)=(\beta_1+\alpha_3)y_1.$ Similarly, considering $d^2\varphi(x_2,y_1,y_2)=0,$ we obtain $\varphi(y_1,z)=-(\beta_1+\alpha_3)y_2.$ Moreover, $d^2\varphi(x_1,x_2,y_2)=0$ and $d^2\varphi(x_1,x_2,y_1)=0$ imply that $\varphi(x_1,z)=(\alpha_1+\beta_3)x_2$ and $\varphi(x_2,z)=-(\alpha_1+\beta_3)x_1$, respectively.
From all this and the substitutions $\alpha_2=\alpha, \ \beta_2=\beta, \ \alpha_3=\gamma$, $\alpha_4=\mu$, we conclude that $\varphi$ has the following form:
\begin{align*}
\varphi(x_1,x_2)&=-2\gamma e, & \varphi(x_1,y_1)&=\alpha z, & \varphi(x_1,y_2)&=\gamma h+\mu z,\\
\varphi(y_1,y_2)&=2\gamma f, & \varphi(x_2,y_2)&=\beta z,& \varphi(x_2,y_1)&=-\gamma h+ \mu z,\\
\varphi(x_1,z)&=- 3\gamma x_2, & \varphi(y_1, z)&=- 3\gamma y_2, & &\\
\varphi(x_2,z)&= 3\gamma x_1, & \varphi(y_2, z)&= 3\gamma y_1. & &
\end{align*}

Therefore, $\dim(Z^2(\mathfrak{h}_2,\mathfrak{sch}_2)^{\mathfrak{sl}_2})=4.$ It follows from Lemma \ref{mainlem1} that $\dim(B^2(\mathfrak{h}_2,\mathfrak{sch}_2)^{\mathfrak{sl}_2})=3.$
Then, by \eqref{iso}, we have $H^2(\mathfrak{h}_2,\mathfrak{sch}_2)^{\mathfrak{sl}_2}\simeq Z^2(\mathfrak{h}_2,\mathfrak{sch}_2)^{\mathfrak{sl}_2}/B^2(\mathfrak{h}_2,\mathfrak{sch}_2)^{\mathfrak{sl}_2}$.
So, we get $$\dim(H^2(\mathfrak{h}_2,\mathfrak{sch}_2)^{\mathfrak{sl}_2})=1.$$

By the Hochschild-Serre factorization theorem (Theorem 13 in \cite{HchSe}), we have
$$H^2(\mathfrak{sch}_2,\mathfrak{sch}_2)\simeq
H^0(\mathfrak{sl}_2,\mathbb{C})\otimes H^2(\mathfrak{h}_2,\mathfrak{sch}_2)^{\mathfrak{sl}_2}\oplus
H^1(\mathfrak{sl}_2,\mathbb{C})\otimes H^1(\mathfrak{h}_2,\mathfrak{sch}_2)^{\mathfrak{sl}_2}\oplus
H^2(\mathfrak{sl}_2,\mathbb{C})\otimes H^0(\mathfrak{h}_2,\mathfrak{sch}_n)^{\mathfrak{sl}_2}.$$

Taking $H^0(\mathfrak{sl}_2,\mathbb{C})\simeq\mathbb{C}, \ H^1(\mathfrak{sl}_2,\mathbb{C})=H^2(\mathfrak{sl}_2,\mathbb{C})=0$ into account we get
$$H^2(\mathfrak{sch}_2,\mathfrak{sch}_2)\simeq
\mathbb{C}\otimes H^2(\mathfrak{h}_2,\mathfrak{sch}_2)^{\mathfrak{sl}_2}\simeq H^2(\mathfrak{h}_2,\mathfrak{sch}_2)^{\mathfrak{sl}_2}.$$

Thus, $\dim(H^2(\mathfrak{sch}_2,\mathfrak{sch}_2))=1.$
\end{proof}

\begin{cor} The alternating bilinear map $\psi$, defined as below, forms a basis on $H^2(\mathfrak{sch}_2,\mathfrak{sch}_2).$
\begin{align*}
\psi(x_1,x_2)&=2e, &  \psi(y_1,y_2)&=-2f,\\
\psi(x_1,y_2)&=-h,  & \psi(x_2,y_1)&=h,\\
\varphi(x_1,z)&= 3 x_2,& \varphi(y_1, z)&=3 y_2,\\
\varphi(x_2,z)&= -3 x_1, & \varphi(y_2, z)&= -3 y_1.
\end{align*}

\end{cor}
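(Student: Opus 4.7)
The plan is to exploit the parametric description of $Z^2(\mathfrak{h},\mathfrak{sch}_2)^{\mathfrak{sl}_2}$ obtained in the proof of Proposition \ref{prop}, together with the dimension count $\dim H^2(\mathfrak{sch}_2,\mathfrak{sch}_2)=1$ already established there. Since the Hochschild-Serre isomorphism gives $H^2(\mathfrak{sch}_2,\mathfrak{sch}_2) \cong H^2(\mathfrak{h},\mathfrak{sch}_2)^{\mathfrak{sl}_2}$, it will suffice to check two things: that $\psi$ is a cocycle, and that the class $[\psi]$ is nonzero.

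For the first point, I would simply match $\psi$ with the explicit parametrized family produced in the proof of Proposition \ref{prop} by specializing $(\alpha,\beta,\gamma,\mu)=(0,0,-1,0)$. Term-by-term comparison of the eight nonzero values confirms $\psi \in Z^2(\mathfrak{h},\mathfrak{sch}_2)^{\mathfrak{sl}_2}$, so no independent cocycle verification through $d^2\psi=0$ is needed. (This also resolves the mismatched $\varphi$/$\psi$ names in the statement.)

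For the second point, I would compute $d^1\omega(x_1,x_2)$ for a generic $\mathfrak{sl}_2$-invariant $1$-cochain $\omega \in C^1(\mathfrak{h},\mathfrak{sch}_2)^{\mathfrak{sl}_2}$. By Lemma \ref{mainlem1} such $\omega$ satisfies $\omega(x_i)\in\langle x_1,x_2\rangle$ and $\omega(y_i)\in\langle y_1,y_2\rangle$; since $[x_i,x_j]=0$ and $[x_i,\langle x_1,x_2\rangle]=0$ in $\mathfrak{h}$, one gets $d^1\omega(x_1,x_2)=0$. On the other hand, $\psi(x_1,x_2)=2e\neq 0$, so no coboundary can equal $\psi$; equivalently, the parameter $\gamma$ is not touched by any element of $B^2(\mathfrak{h},\mathfrak{sch}_2)^{\mathfrak{sl}_2}$, while $\psi$ has $\gamma=-1$.

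Combining these observations, $[\psi]$ is a nonzero element of a one-dimensional space, hence a basis. The whole argument is short: there is no real obstacle, the only care needed is recalling the $\mathfrak{sl}_2$-invariance constraints from Lemma \ref{mainlem1} that forbid coboundaries from producing a nonzero $e$-component on the pair $(x_1,x_2)$.
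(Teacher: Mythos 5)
Your proposal is correct and follows exactly the route the paper leaves implicit: the corollary is obtained by specializing the four-parameter family of cocycles from the proof of Proposition~\ref{prop} at $(\alpha,\beta,\mu,\gamma)=(0,0,0,-1)$, and your observation that every invariant coboundary vanishes on $(x_1,x_2)$ (since $[x_1,x_2]=0$ and $\omega(x_i)\in\langle x_1,x_2\rangle$) cleanly supplies the nontriviality of $[\psi]$ that the paper does not spell out. No gaps; you also correctly flag the $\varphi$/$\psi$ typo in the statement.
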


\section{Infinitesimal deformations and central extensions of $\mathfrak{sch}_n / \langle z \rangle$}

Let $\mathfrak{g}_n$ be the factor algebra of $\mathfrak{sch}_n$ by its center, i.e., $\mathfrak{g}_n = \mathfrak{sch}_n / \langle z \rangle.$ Then the brackets of $\mathfrak{g}_n$ are as follows:
$$\begin{array}{llll}
[e,f]=h, & [h,e]=2e, & [h,f]=-2f,\\[1mm]
[h,x_i]=x_i, & [h,y_i]=-y_i,                    & [e,y_i]=x_i, & [f,x_i]=y_i. \\[1mm]
\end{array}$$

One can verify that the algebra $\mathfrak{g}_n$ can be regarded as the semidirect product of the simple Lie algebra $\mathfrak{sl}_2$ and the $2n$-dimensional abelian algebra $\mathfrak{a}_n= \langle x_1, \dots, x_n, y_1, \dots, y_n\rangle .$

First, we determine central extensions of the centerless $n$-th Schr\"{o}dinger algebra $\mathfrak{g}_n$ with trivial coefficients for $n\geq1$.

\begin{thm}{\label{thmg}}
$ \dim(H^2(\mathfrak{g}_n,\mathbb{C}))=\frac{n(n+1)}{2}$ for $n\geq1.$
\end{thm}

\begin{proof}

Let $n\geq1.$ Then $H^0(\mathfrak{sl}_2,\mathbb{C})\simeq\mathbb{C},$ $H^1(\mathfrak{sl}_2,\mathbb{C})=H^2(\mathfrak{sl}_2,\mathbb{C})=0$, and therefore the Hochschild-Serre factorization theorem (Theorem 13 in \cite{HchSe}) implies that
$$H^2(\mathfrak{g}_n,\mathbb{C})\simeq H^2(\mathfrak{a}_{n},\mathbb{C})^{\mathfrak{sl}_2}.$$

Since $\mathfrak{a}_{n},$ we easily get that $B^2(\mathfrak{a}_n,\mathbb{C})^{\mathfrak{sl}_2}=0$.
Let $\varphi\in Z^2(\mathfrak{a}_n,\mathbb{C})^{\mathfrak{sl}_2}.$
Considering $h.\varphi(x_i,x_j)=0$ and $h.\varphi(y_i,y_j)=0,$ we get that $\varphi(x_i, x_j) = 0$ and $\varphi(y_i, y_j)=0$ for any $i, j \in\{1,2,\dots,n\}.$
Moreover, the equality $$0=e.\varphi(y_i,y_j)= \varphi([e,y_i], y_j)+\varphi(y_i, [e,y_j])=\varphi(x_i, y_j)+\varphi(y_i, x_j)$$
implies that $\varphi(x_i, y_j)=\varphi(x_j, y_i)$ for $i\neq j.$
Summarizing the relations above, we deduce that $\dim(Z^2(\mathfrak{a}_n,\mathbb{C})^{\mathfrak{sl}_2})=\frac{n(n+1)}{2}$. Furthermore, by \eqref{iso} we have
$
H^2(\mathfrak{a}_n,\mathbb{C})^{\mathfrak{sl}_2}= Z^2(\mathfrak{a}_n,\mathbb{C})^{\mathfrak{sl}_2}/B^2(\mathfrak{a}_n,\mathbb{C})^{\mathfrak{sl}_2}.
$
Hence, $ \dim(H^2(\mathfrak{sch}_n,\mathbb{C}))=\frac{n(n+1)}{2}$.
\end{proof}

\begin{cor}
The set of anti-symmetric bilinear forms $\varphi_{ij} \ (i,j\in\{1,2,\dots,n\})$ satisfying \eqref{fij2} and \eqref{fij} is a basis for $H^2(\mathfrak{g}_n,\mathbb{C})$.
\end{cor}

Since $\dim(\mathfrak{g}_1)=5$, the direct computations show that $\dim(Z^2(\mathfrak{g}_1,\mathfrak{g}_1))=19$. On the other hand, by Remark 2.3 in \cite{LY}, we have $\dim({\rm Der}(\mathfrak{g}_1))=6$, and therefore $\dim(B^2(\mathfrak{g}_1,\mathfrak{g}_1))=5^2-\dim({\rm Der}(\mathfrak{g}_1)=19$. Thus, $H^2(\mathfrak{g}_1,\mathfrak{g}_1)=0$.

Now, we determine infinitesimal deformations of the Lie algebra $\mathfrak{g}_n$ for $n\geq3$.

\begin{thm}{\label{mainthm2}}
 $H^2(\mathfrak{g}_n,\mathfrak{g}_n)=0$ for $n\geq 3.$
\end{thm}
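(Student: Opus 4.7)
The plan is to mirror the strategy used in the proof of Theorem~\ref{mainthm}. Since $\mathfrak{g}_n = \mathfrak{sl}_2\ltimes\mathfrak{a}$ with $\mathfrak{a}$ abelian, and since $H^1(\mathfrak{sl}_2,\mathbb{C})=H^2(\mathfrak{sl}_2,\mathbb{C})=0$, the Hochschild--Serre factorization theorem reduces the problem to showing
$$H^2(\mathfrak{g}_n,\mathfrak{g}_n)\simeq H^2(\mathfrak{a},\mathfrak{g}_n)^{\mathfrak{sl}_2}=0.$$
I will establish this by proving that both the cocycle space $Z^2(\mathfrak{a},\mathfrak{g}_n)^{\mathfrak{sl}_2}$ and the coboundary space $B^2(\mathfrak{a},\mathfrak{g}_n)^{\mathfrak{sl}_2}$ are trivial.

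First I would determine the shape of an arbitrary $\mathfrak{sl}_2$-invariant $2$-cocycle $\varphi$ by the weight analysis with respect to $\operatorname{ad}_h$ used in Lemma~\ref{mainlem2}. The $h$-weights of $x_i, y_i, e, f, h$ are $+1, -1, +2, -2, 0$, so the invariance conditions $h.\varphi=0$ force $\varphi(x_i,x_j)=\alpha_{i,j}e$, $\varphi(y_i,y_j)=\delta_{i,j}f$ and $\varphi(x_i,y_j)=\beta_{i,j}h$. The crucial simplification compared with the $\mathfrak{sch}_n$ case is that $z$ has been quotiented out, so no central contribution appears in $\varphi(x_i,y_j)$. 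The relations $e.\varphi(y_i,y_j)=0$ and $f.\varphi(x_i,x_j)=0$ then tie these coefficients together by identities of the form $\delta_{i,j}=\beta_{i,j}-\beta_{j,i}$ and $\alpha_{i,j}=\beta_{j,i}-\beta_{i,j}$.

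Next I would exploit the cocycle identity \eqref{eq4.2}. Because $\mathfrak{a}$ is abelian, all brackets $[a,b]$ with $a,b\in\mathfrak{a}$ vanish, so the identity collapses to
$$d^2\varphi(a,b,c)=[a,\varphi(b,c)]+[b,\varphi(c,a)]+[c,\varphi(a,b)]=0.$$
Applied to the triple $(x_i,x_j,y_k)$ with three distinct indices (possible exactly because $n\geq3$) this yields a relation of the form $-\beta_{j,k}x_i+\beta_{i,k}x_j-\alpha_{i,j}x_k=0$, forcing $\beta_{j,k}=0$ for $j\neq k$ and $\alpha_{i,j}=0$ for $i\neq j$. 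The symmetric triple $(x_i,y_j,y_k)$ with three distinct indices similarly annihilates $\delta_{j,k}$ for $j\neq k$. Finally, evaluating at $(x_i,y_i,y_j)$ with $i\neq j$ isolates a single surviving term $\beta_{i,i}y_j$, which forces $\beta_{i,i}=0$. Since $\alpha_{i,j}$ and $\delta_{i,j}$ are antisymmetric, their diagonal entries vanish automatically, so all parameters are zero and $Z^2(\mathfrak{a},\mathfrak{g}_n)^{\mathfrak{sl}_2}=0$.

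For the coboundary side, the same weight argument applied to an $\mathfrak{sl}_2$-invariant $1$-cochain $\omega\colon\mathfrak{a}\to\mathfrak{g}_n$ shows $\omega(x_i)\in\langle x_1,\dots,x_n\rangle$ and $\omega(y_i)\in\langle y_1,\dots,y_n\rangle$, so $\omega$ already lands inside $\mathfrak{a}$. Since $\mathfrak{a}$ is abelian, $d^1\omega$ vanishes identically, giving $B^2(\mathfrak{a},\mathfrak{g}_n)^{\mathfrak{sl}_2}=0$ for free. The main obstacle is the need for a third distinct index to separate the coefficients in the cocycle identity on $(x_i,x_j,y_k)$; this is precisely why the argument requires $n\geq3$, and, in parallel with the proposition on $\mathfrak{sch}_2$, one should expect the case $n=2$ to exhibit a nontrivial second cohomology space.
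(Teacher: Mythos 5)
Your argument is correct and follows essentially the same route as the paper: the Hochschild--Serre factorization reduces the problem to $H^2(\mathfrak{a},\mathfrak{g}_n)^{\mathfrak{sl}_2}$, the coboundaries vanish because any invariant $1$-cochain lands inside the abelian ideal $\mathfrak{a}$, and the invariant cocycles vanish by the $\operatorname{ad}_h$-weight analysis combined with the cocycle identity evaluated on triples involving a third distinct index. If anything, you supply more detail than the paper, whose proof of the cocycle step only appeals to ``the same $\mathfrak{sl}_2$-invariance arguments as in Lemma~\ref{mainlem2}''; your explicit use of $d^2\varphi(x_i,x_j,y_k)=0$ (and of $(x_i,y_i,y_j)$ to kill the diagonal coefficients) is precisely the content that reference is meant to carry.
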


\begin{proof}
By the Hochschild-Serre factorization theorem (Theorem 13 in \cite{HchSe}) and Whitehead's first and second lemmas we have
$$H^2(\mathfrak{g}_n,\mathfrak{g}_n)\simeq
\mathbb{C}\otimes H^2(\mathfrak{a}_n,\mathfrak{g}_n)^{\mathfrak{sl}_2}\simeq H^2(\mathfrak{a}_n,\mathfrak{g}_n)^{\mathfrak{sl}_2}.$$

Let $\mathfrak{g}_n=\mathfrak{sl}_2\ltimes\mathfrak{a}_n$ be the Lie algebra defined as above. For a fixed cocycle $\varphi\in Z^{2}(\mathfrak{a}_n,\mathfrak{g}_n)^{\mathfrak{sl}_2}$, we need to define  $\varphi(x_i,x_j), \ \varphi(x_i,y_j), \ \varphi(y_i,y_j).$ Using the same $\mathfrak{sl}_2$-invariance arguments as in the proof of Lemma \ref{mainlem2}, we can easily get that $\varphi(x_i,y_j)=0, \ \varphi(x_i,x_j)=0, \ \varphi(y_i,y_j)=0.$ Hence, we derive $Z^2(\mathfrak{a}_n,\mathfrak{g}_n)^{\mathfrak{sl}_2}=0.$ Since $H^2(\mathfrak{a}_n,\mathfrak{g}_n)^{\mathfrak{sl}_2}=Z^2(\mathfrak{a}_n,\mathfrak{g}_n)^{\mathfrak{sl}_2}/B^2(\mathfrak{a}_n,\mathfrak{g}_n)^{\mathfrak{sl}_2}$, we obtain that $H^2(\mathfrak{a}_n,\mathfrak{g}_n)^{\mathfrak{sl}_2}=0.$
\end{proof}

\begin{cor}{\label{prop2}} The centerless $n$-th Schr\"{o}dinger algebra $\mathfrak{g}_n$ is rigid for  $ n \geq 3.$
\end{cor}

 %and
 %$\omega\in C^{1}(\mathfrak{a},\mathfrak{g}_n)^{\mathfrak{sl}_2}.$ Then, applying the same arguments as in the proof of Lemma \ref{mainlem1} we obtain
%\begin{align*}
%\omega(x_i)=\sum_{j=1}^{n}\alpha_{i,j}x_j,  \quad  \omega(y_i)=\sum_{j=1}^{n}\alpha_{i,j}y_j.
%\end{align*}
%Since $\mathfrak{a}$ is abelian, any linear transformation on it is a derivation, so is $\omega.$ Hence, $B^2(\mathfrak{a},\mathfrak{g}_n)^{\mathfrak{sl}_2}=0.$

%Now, we find the dimension of $Z^2(\mathfrak{a},\mathfrak{g}_n)^{\mathfrak{sl}_2}.$

\begin{prop}{\label{prop2}}
 $ \dim(H^2(\mathfrak{g}_2,\mathfrak{g}_2))=1.$
\end{prop}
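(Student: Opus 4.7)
The plan is to mirror the proof of the analogous statement for $\mathfrak{sch}_2$ (the preceding proposition), suitably adapted to the absence of a center in $\mathfrak{g}_2$. By the Hochschild--Serre factorization theorem combined with Whitehead's lemmas---exactly as applied in the proof of Theorem~\ref{mainthm2}---one obtains
\[
H^2(\mathfrak{g}_2,\mathfrak{g}_2)\simeq H^2(\mathfrak{a},\mathfrak{g}_2)^{\mathfrak{sl}_2}.
\]
Moreover, the coboundary argument used in the proof of Theorem~\ref{mainthm2} still applies verbatim: every $\mathfrak{sl}_2$-invariant 1-cochain $\omega\colon\mathfrak{a}\to\mathfrak{g}_2$ takes values in $\mathfrak{a}$ (the only subspaces of $\mathfrak{g}_2$ of $h$-weights $\pm 1$), and since $\mathfrak{a}$ is abelian its coboundary vanishes identically. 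Hence $B^2(\mathfrak{a},\mathfrak{g}_2)^{\mathfrak{sl}_2}=0$, and the task reduces to proving $\dim Z^2(\mathfrak{a},\mathfrak{g}_2)^{\mathfrak{sl}_2}=1$.

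To compute this cocycle space, I would fix $\varphi\in Z^2(\mathfrak{a},\mathfrak{g}_2)^{\mathfrak{sl}_2}$ and exploit the $h$-weight decomposition of $\mathfrak{g}_2$, whose weight spaces are $\langle e\rangle$, $\langle x_1,x_2\rangle$, $\langle h\rangle$, $\langle y_1,y_2\rangle$, $\langle f\rangle$ in weights $2,1,0,-1,-2$ respectively. The $h$-invariance conditions immediately force
\[
\varphi(x_i,x_j)\in\langle e\rangle,\quad \varphi(x_i,y_j)\in\langle h\rangle,\quad \varphi(y_i,y_j)\in\langle f\rangle.
\]
Writing $\varphi(x_1,x_2)=\alpha e$, $\varphi(y_1,y_2)=\beta f$ and $\varphi(x_i,y_j)=\beta_{ij}h$, I would then impose the $e$-- and $f$--invariance relations, the computations being identical to those in the $\mathfrak{sch}_2$ proof; these yield $\beta_{ii}=0$, $\beta_{ji}=-\beta_{ij}$, $\alpha=-2\beta_{12}$ and $\beta=2\beta_{12}$, reducing everything to a single free parameter $\beta_{12}$.

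The final step is to impose the cocycle condition $d^2\varphi=0$ on triples $(a_1,a_2,a_3)\in\mathfrak{a}^3$. Because $[\mathfrak{a},\mathfrak{a}]=0$ in $\mathfrak{g}_2$, the six-term identity \eqref{eq4.2} collapses to
\[
[a_1,\varphi(a_2,a_3)]+[a_2,\varphi(a_3,a_1)]+[a_3,\varphi(a_1,a_2)]=0,
\]
which I would check on representative triples such as $(x_1,x_2,y_1)$, $(x_1,x_2,y_2)$, $(x_1,y_1,y_2)$ and $(x_2,y_1,y_2)$. The main obstacle lies precisely here: in the $\mathfrak{sch}_2$ case the surviving deformation class is kept alive by the Heisenberg brackets $[x_i,y_i]=z$, which contribute auxiliary terms $\varphi(\cdot,z)$ that balance the cocycle identities; in $\mathfrak{g}_2$ those terms are gone, so the cancellations among the $[\mathfrak{sl}_2,\mathfrak{a}]$--brackets alone must conspire to preserve exactly one free parameter. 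Once this delicate sign-bookkeeping is carried out on all independent triples and the surviving generator is identified explicitly, combining $\dim Z^2(\mathfrak{a},\mathfrak{g}_2)^{\mathfrak{sl}_2}=1$ with $\dim B^2(\mathfrak{a},\mathfrak{g}_2)^{\mathfrak{sl}_2}=0$ yields the stated equality $\dim H^2(\mathfrak{g}_2,\mathfrak{g}_2)=1$.
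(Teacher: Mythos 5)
There is a genuine gap, and it sits exactly where you yourself flag ``the main obstacle'': you never carry out the cocycle condition $d^2\varphi=0$ on triples from $\mathfrak{a}$, you only express the hope that the cancellations ``conspire to preserve exactly one free parameter.'' They do not. With your normalization $\varphi(x_1,x_2)=-2\beta_{12}e$, $\varphi(y_1,y_2)=2\beta_{12}f$, $\varphi(x_1,y_2)=\beta_{12}h$, $\varphi(x_2,y_1)=-\beta_{12}h$, $\varphi(x_i,y_i)=0$, the collapsed identity on the triple $(x_1,x_2,y_1)$ reads
\begin{align*}
[x_1,\varphi(x_2,y_1)]+[x_2,\varphi(y_1,x_1)]+[y_1,\varphi(x_1,x_2)]
&=[x_1,-\beta_{12}h]+0+[y_1,-2\beta_{12}e]\\
&=\beta_{12}x_1+2\beta_{12}x_1=3\beta_{12}x_1,
\end{align*}
and the triple $(x_1,y_1,y_2)$ similarly gives $-3\beta_{12}y_1$. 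So the condition you propose to impose forces $\beta_{12}=0$ and hence $Z^2(\mathfrak{a},\mathfrak{g}_2)^{\mathfrak{sl}_2}=0$, i.e.\ your route, completed honestly, yields $\dim H^2(\mathfrak{g}_2,\mathfrak{g}_2)=0$ rather than $1$. Your own diagnosis explains why: in $\mathfrak{sch}_2$ these $3\gamma x_1$-type terms are cancelled by the contributions $\varphi(\cdot,[x_i,y_i])=\varphi(\cdot,z)=\pm3\gamma x_1$, and in $\mathfrak{g}_2$ that rescue is unavailable.

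Note also that this is a point of genuine divergence from the paper: the paper's proof of Proposition~\ref{prop2} determines $\varphi$ using only the $\mathfrak{sl}_2$-invariance relations and never imposes $d^2\varphi=0$ on triples $(a_1,a_2,a_3)\in\mathfrak{a}^3$, which is how it arrives at a one-dimensional space. Since $Z^2(\mathfrak{a},\mathfrak{g}_2)^{\mathfrak{sl}_2}$ consists by definition of invariant \emph{cocycles}, the condition you want to impose is the correct one, and you cannot simply drop it to match the stated answer. Either way, your write-up is not a proof of the statement as it stands: the decisive computation is missing, and when supplied it contradicts the claimed dimension, so you would need either to exhibit an explicit nonzero invariant cocycle surviving $d^2\varphi=0$ (none exists by the calculation above) or to revise the target of the argument.
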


\begin{proof}
Let $\omega\in C^1(\mathfrak{a}_2,\mathfrak{g}_2)^{\mathfrak{sl}_2}.$ Then, the relations $h.\omega(x_i)=0$ and $h.\omega(y_i)=0$
imply that $\omega(x_i)\in\langle x_1,x_2\rangle$ and  $\omega(y_i)\in\langle y_1,y_2\rangle$ for $i=1,2.$
Moreover, from $e.\omega(y_i)=0,$ we get $[e,\omega(y_i)]=\omega(x_i)$ for $i=1,2.$
Thus, we get that any element $\omega\in C^1(\mathfrak{a}_2,\mathfrak{g}_2)^{\mathfrak{sl}_2}$ has the form
\begin{align*}
\omega(x_1) &= \alpha_1x_1 + \alpha_2x_2, & \omega(x_2) &= \alpha_3x_1 + \alpha_4x_2, \\
\omega(y_1) &= \alpha_1y_1 + \alpha_2y_2, & \omega(y_2) &= \alpha_3y_1 + \alpha_4y_2.
\end{align*}

Hence, $\dim(C^1(\mathfrak{a}_2,\mathfrak{g}_2)^{\mathfrak{sl}_2})=4.$ On the other hand, Since $\mathfrak{a}_2$ is abelian, any linear transformation on it is a derivation, so is $\omega.$ Hence, $B^2(\mathfrak{a}_2,\mathfrak{g}_n)^{\mathfrak{sl}_2}=0.$

Now, let $\varphi\in Z^2(\mathfrak{a}_2,\mathfrak{g}_2)^{\mathfrak{sl}_2}.$ Then, by the relation $h.\varphi(x_1,y_2)=0,$ we get that $\varphi(x_1,y_2)\in\langle h\rangle.$ In the same manner, by $h.\varphi(x_2,y_1)=0,$ we obtain $\varphi(x_2,y_1)\in\langle h\rangle.$ Moreover, from
$h.\varphi(x_1,x_2)=0, \ h.\varphi(y_1,y_2)=0,$
we can easily get that $\varphi(x_1,x_2)\in\langle e\rangle$ and  $\varphi(y_1,y_2)\in\langle f\rangle$, respectively.
 In addition, the relation $e.\varphi(x_1,y_1)=0$ implies that $[e,\varphi(x_1,y_1)]=0.$ Thus, $\varphi(x_1,y_1)=0.$ Similarly, by $e.\varphi(x_2,y_2)=0$, we get $\varphi(x_2,y_2)=0.$
Therefore, we can set
\begin{align*}
\varphi(x_1,x_2)&=\gamma e, &
\varphi(x_1,y_2)&=\alpha h,\\
\varphi(y_1,y_2)&=\mu f, & \varphi(x_2,y_1)&=\beta h.
\end{align*}

The relation $e.\varphi(x_1,y_2)=0$ implies that $\varphi(x_1,x_2)=[e,\varphi(x_1,y_2)].$ In the same manner, by $f.\varphi(x_1,y_2)=0$, we can get $\varphi(y_1,y_2)=[f,\varphi(x_1,y_2)].$ These imply that $\gamma=-2\alpha$ and $\mu=2\alpha.$ Furthermore, the relations $e.\varphi(x_2,y_1)=0,$ $f.\varphi(x_2,y_1)=0$ leads to $\varphi(x_1,x_2)=-[e,\varphi(x_2,y_1)]$ and $\varphi(y_1,y_2)=-[f,\varphi(x_2,y_1)],$ respectively. Thus, $\beta=-\alpha.$

Summarizing these relations, we conclude that $\varphi$ is of the following form:
\begin{align*}
\varphi(x_1,x_2)&=-2\alpha e, &
\varphi(x_1,y_2)&=\alpha h,\\
\varphi(y_1,y_2)&=2\alpha f, &
\varphi(x_2,y_1)&=-\alpha h.
\end{align*}

Therefore, $\dim(Z^2(\mathfrak{a}_2,\mathfrak{g}_2)^{\mathfrak{sl}_2})=1.$ Moreover, by \eqref{iso} we have $$H^2(\mathfrak{a}_2,\mathfrak{sch}_2)^{\mathfrak{sl}_2}\simeq Z^2(\mathfrak{a}_2,\mathfrak{sch}_2)^{\mathfrak{sl}_2}/B^2(\mathfrak{a}_2,\mathfrak{sch}_2)^{\mathfrak{sl}_2}$$
which follows that $\dim(H^2(\mathfrak{a}_2,\mathfrak{g}_2)^{\mathfrak{sl}_2})=1.$ Hence, $\dim(H^2(\mathfrak{g}_2,\mathfrak{g}_2))=1.$
\end{proof}

\begin{cor}
The alternating bilinear map $\psi$ defined as below, forms a basis on $H^2(\mathfrak{g}_2,\mathfrak{g}_2).$
\begin{align*}
\psi(x_1,x_2)&=2e,  & \psi(y_1,y_2)&=-2f,\\
\psi(x_1,y_2)&=-h,  & \psi(x_2,y_1)&=h.
\end{align*}

\end{cor}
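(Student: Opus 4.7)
The plan is to piggyback on Proposition \ref{prop2}: its proof already classified every $\mathfrak{sl}_2$-invariant 2-cocycle $\varphi \in Z^2(\mathfrak{a},\mathfrak{g}_2)^{\mathfrak{sl}_2}$ as a one-parameter family depending on a scalar $\alpha$, with values
$\varphi(x_1,x_2)=-2\alpha e$, $\varphi(y_1,y_2)=2\alpha f$, $\varphi(x_1,y_2)=\alpha h$, $\varphi(x_2,y_1)=-\alpha h$. Specializing $\alpha = -1$ reproduces exactly the bilinear map $\psi$ of the corollary. So the first step is simply to point out that $\psi$ arises from that family, which already certifies that $\psi$ is a 2-cocycle and is $\mathfrak{sl}_2$-invariant; no direct cocycle verification is needed.

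Next I would argue that $\psi$, viewed through the Hochschild–Serre isomorphism $H^2(\mathfrak{g}_2,\mathfrak{g}_2) \simeq H^2(\mathfrak{a},\mathfrak{g}_2)^{\mathfrak{sl}_2}$, represents a nonzero cohomology class. The proof of Proposition \ref{prop2} established that $B^2(\mathfrak{a},\mathfrak{g}_2)^{\mathfrak{sl}_2} = 0$ (because $\mathfrak{a}$ is abelian, so every $\omega \in C^1(\mathfrak{a},\mathfrak{g}_2)^{\mathfrak{sl}_2}$ is automatically a derivation and the coboundary $d^1\omega$ vanishes). Consequently, every nonzero invariant cocycle—in particular $\psi$—descends to a nonzero class in $H^2(\mathfrak{a},\mathfrak{g}_2)^{\mathfrak{sl}_2}$, and hence to a nonzero class in $H^2(\mathfrak{g}_2,\mathfrak{g}_2)$ via the isomorphism.

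Finally, combining these observations with the dimension count $\dim H^2(\mathfrak{g}_2,\mathfrak{g}_2) = 1$ from Proposition \ref{prop2}, the class $[\psi]$ must span the whole space and therefore forms a basis. This finishes the argument.

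No genuine obstacle is anticipated: the only careful point is to remember that the classification in Proposition \ref{prop2} was phrased over the abelian radical $\mathfrak{a}$, while the corollary speaks of $H^2(\mathfrak{g}_2,\mathfrak{g}_2)$, so one must mention the Hochschild–Serre reduction (with the implicit convention of extending $\psi$ by zero on any argument from $\mathfrak{sl}_2$). Once this translation is made explicit, the corollary is an immediate bookkeeping consequence of the preceding proposition.
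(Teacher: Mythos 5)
Your proposal is correct and follows exactly the route the paper intends: the corollary is an immediate consequence of the proof of Proposition~\ref{prop2}, whose one-parameter family of invariant cocycles specializes at $\alpha=-1$ to $\psi$, and since $B^2(\mathfrak{a},\mathfrak{g}_2)^{\mathfrak{sl}_2}=0$ and the cohomology space is one-dimensional, the class of $\psi$ spans it.
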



\begin{thebibliography}{99}


%\bibitem{AncB} Ancochea Berm\'{u}dez J.M., Campoamor-Stursberg R., Cohomologically rigid solvable Lie algebras with a nilradical of arbitrary characteristic sequence, Linear Algebra Appl., 488 (2016), 135-147.


%\bibitem{A} Angelopoulos E., Alg\`ebres de Lie $\mathfrak{g}$ satisfaisant $[\mathfrak{g},\mathfrak{g}]=\mathfrak{g}$, $\Der(\mathfrak{g})={\rm ad} (\mathfrak{g})$, C.\ R.\ Acad.\ Sci.\ Paris, 306 (1988), 523-525.


%\bibitem{A1} Angelopoulos E.,  Construction d'alg\`ebres de Lie sympathiques non semi-simples munies de produits scalaires invariants, C.\ R.\ Acad.\ Sci.\ Paris, 317 (1993), 741-744.


%\bibitem{B1} Benayadi S.,  Certaines propri\'et\'es d'une classe d'alg\`ebres de Lie qui g\'en\'eralisent les alg\`ebres de Lie semi-simples, Ann.\ Fac.\ Sci.\ Toulouse, 12 (1991), 1, 29-35.


%\bibitem{BurWag} Burde D., Wagemann F. Sympathetic Lie algebras and adjoint cohomology for Lie algebras, J. Algebra, 629 (2023), 381-398.


\bibitem{Ru} Campoamor-Stursberg R., Cohomological rigidity of the Schr\"odinger algebra $S(N)$ and its central extension $\hat{S}(N)$, J. Lie Theory, 27 (2017), 315-328.

\bibitem{CEil} Cartan H., Eilenberg S., Homological Algebra, Princeton University Press, Princeton, NJ, 1956.

%\bibitem{ZhYu} Chen Zh., Wang Y., Quasi-Whittaker modules for the $n$-th Schr\"odinger algebra, Linear Algebra Appl., 677 (2023), 51-70.

%\bibitem{ZhYu2} Chen Zh., Wang Y., Zero product determined $n$-th Schr\"odinger algebra, Linear Algebra Appl., 679 (2023), 165-193.



\bibitem{ChEg} Chevalley C., Eilenberg S., Cohomology theory of Lie groups and Lie algebras, Trans. Amer. Math. Soc., 63 (1948), 85-124.


\bibitem{Makhlouf}  Ehret Q., Makhlouf A., On classification and deformations of Lie-Rinehart superalgebras,  Comm. in Math., 30 (2022), 67-92.


%\bibitem{FelWag} Feldvoss J., Wagemann F. On Leibniz cohomology, J. Algebra, 569 (2021), 276-317.

\bibitem{Gest} Gerstenhaber M., On the Deformation of Rings and Algebras. Ann. Math., 79 (1964), 1, 59-103.

\bibitem{HchSe} Hochschild G., Serre J.-P., Cohomology of Lie algebras, Ann. Math., (2) 57 (1953),  591-603.

%\bibitem{Khud1}	Khudoyberdiyev A.Kh., Ladra M., Masutova K.K., Omirov B.A.,Some irreducible components of the variety of complex $(n+1)$-dimensional Leibniz algebras, J. Geom. Physics, 121 (2017), 228-246.

%\bibitem{Khud2}	Khudoyberdiyev A.Kh., Omirov B.A., Infinitesimal deformations of naturally graded filiform Leibniz algebras, J. Geom. Physics, 86 (2014), 149-163.


\bibitem{Ksz} Koszul J.-L., Homologie et cohomologie des alg\`{e}bres de Lie, Bull. Soc. Math. France, 78 (1950), 65-127.


\bibitem{LY} Lei B., Yang H., The derivation algebra and automorphism group of the $n$-th Schr\"odinger algebra,  Comm. in Algebra, 52 (2024), 283-294.


\bibitem{NR} Nijenhuis A., Richardson R. W. Jr., Deformations of Lie algebra structures, J. Math. Mech., 17 (1967), 89-105.

% \bibitem{Pirashvili} Pirashvili T., On strongly perfect Lie algebras, Comm. in Algebra,  41 (2013), 5, 1619-1625.



% \bibitem{ZUS} Zusmanovich P., A converse to the second Whitehead lemma, J. Lie Theory, 18 (2008), 2, 295-299.



\bibitem{WuZh} Wu Y.Zh., Yue X.Q., Zhu L.Sh., Cohomology of the Schr\"{o}dinger Algebra $\mathcal{S}(1),$ Acta Mathematica Sinica, 30 (2014), 2054-2062.


\bibitem{WuT} Wu Q., Tang X., Derivations and biderivations of the Schr\"{o}dinger algebra in $(n+1)$-dimensional space-time, Linear Multilin. Algebra, 71 (2023), 1073-1097.





















\end{thebibliography}
\end{document}